\newcommand{\myforall}{~~{\rm for\ all}~~}
\newcommand{\myforsome}{~~{\rm for\ some}~~}
\newcommand{\myiff}{~~{\rm if\ and\ only\ if}~~}
\newcommand{\conjugacy}[2]{{#2} \circ {#1} \circ {#2}^{-1}}
\newcommand{\Per}{{\rm Per}}
\newcommand{\percon}[2]{\Per({#1}) \subseteq \Per({#2})} 
\newcommand{\tfullshiftt}{two-sided full shift\ }
\newcommand{\tsubshiftt}{two-sided subshift\ }
\newcommand{\tsubshift}{two-sided subshift}
\newcommand{\kinsetu}{\vartriangleright}
\newcommand{\Z}{\mathbf{Z}} 
\newcommand{\Posint}{\Z_+}
\newcommand{\beposint}{\in \Posint}
\newcommand{\beint}{\in \Z}
\newcommand{\diam}{{\rm diam}}
\newcommand{\mesh}{{\rm mesh}}
\newcommand{\SFT}{{\rm SFT}} 
\newcommand{\Hom}{\mathcal{H}} 
\newcommand{\End}{\mathcal{H^+}} 
\newcommand{\U}{\mathscr{U}} 
\newcommand{\V}{\mathscr{V}}
\newcommand{\W}{\mathscr{W}}
\newcommand{\kuu}{\emptyset}
\newcommand{\nekuu}{\neq \kuu} 
\newcommand{\barphi}{\bar{\phi}}
\newcommand{\enuma}{\begin{enumerate}}
\newcommand{\enumz}{\end{enumerate}}
\newtheorem{thm}{Theorem}[section]
\newtheorem{lem}[thm]{Lemma}
\newtheorem{cor}[thm]{Corollary}
\theoremstyle{definition}
\theoremstyle{remark}
\numberwithin{equation}{section}
\begin{document}

\title[APERIODIC HOMEOMORPHISMS APPROXIMATE]{APERIODIC HOMEOMORPHISMS APPROXIMATE\\
CHAIN MIXING ENDOMORPHISMS \\
ON THE CANTOR SET}

\author{TAKASHI SHIMOMURA}

\address{Nagoya Keizai University, Uchikubo 61-1, Inuyama 484-8504, Japan}
\curraddr{}
\email{tkshimo@nagoya-ku.ac.jp}
\thanks{}

\subjclass[2010]{Primary 37B10, 54H20}

\keywords{Cantor set, homeomorphism, dynamical system, chain mixing, approximate, conjugacy}

\date{April 30, 2011}

\dedicatory{}

\commby{}

\begin{abstract}
Let $f$ be a chain mixing continuous onto mapping from the Cantor set onto itself.
Let $g$ be an aperiodic homeomorphism on the Cantor set. We show that homeomorphisms that are topologically conjugate to g approximate $f$ in the topology of uniform convergence if a trivial necessary condition on periodic points is satisfied.
In particular, let $f$ be a chain mixing continuous onto mapping from the Cantor set onto itself with a fixed point and g, an aperiodic homeomorphism on the Cantor set. Then, homeomorphisms that are topologically conjugate to $g$ approximate $f$.
\end{abstract}

\maketitle

\begin{center} PREPRINT
\end{center}

\section{Introduction}
Let $(X,d)$ be a compact metric space.
Let $\End(X)$ be the set of all continuous mappings from $X$ onto itself.
In this manuscript, the pair $(X,f)$ ($f \in \End(X)$) is called a {\it topological dynamical system}. We mainly consider the case in which $X$ is homeomorphic to the Cantor set, denoted by $C$.
For any $f,g \in \End(X)$, we define $d(f,g):=\sup_{x \in X }d(f(x),g(x))$.
Then, $(\End(X),d)$ is a metric space of uniform convergence.
Let $\Hom(X)$ be the set of all homeomorphisms from $X$ onto itself.
Let $X$ be homeomorphic to $C$.
$\SFT(X)$ denotes the set of all $f \in \Hom(X)$ that are topologically conjugate to some \tsubshiftt of finite type.
T.~Kimura \cite[Theorem 1]{Kimura} and I \cite{Shimomura} have shown that elements in $\Hom(C)$ are approximated by expansive homeomorphisms with the pseudo-orbit tracing property.
$\SFT(C)$ coincides with the set of all expansive $f \in \Hom(C)$ with the pseudo-orbit tracing property (P.~Walters \cite[Theorem 1]{Walters}).
Therefore, $\SFT(C)$ is dense in $\Hom(C)$.
Fix $f \in \Hom(C)$.
Homeomorphisms that are topologically conjugate to $f$ will approximate some other homeomorphisms.
Let $(X,f)$ be a topological dynamical system. $x \in X$ is called a periodic point of period $n$ if $f^n(x) = x$.
Let $\Per(X,f) := \{n \beposint ~|~ f^n(x) = x \myforsome x \in X\}$, where $\Posint$ denotes the set of all positive integers.
Let $(X,f)$ and $(Y,g)$ be topological dynamical systems.
In this manuscript, a continuous mapping $\phi : Y \to X$ is said to be {\it commuting} if $\phi \circ g = f \circ \phi$ holds.
We write $(Y,g) \kinsetu (X,f)$ if there exists a sequence of homeomorphisms $\{\psi_k\}_{k=1,2,\dots}$ from $Y$ onto $X$ such that
 $\conjugacy{g}{\psi_k} \to f$ as $k \to \infty$.
Suppose that $(Y,g) \kinsetu (X,f)$ and that $g^n$ has a fixed point for some positive integer $n$. Then, $f^n$ must also have a fixed point.
Therefore, we get $\percon{Y,g}{X,f}$.
Let $\delta >0$.
A sequence $\{x_i\}_{i = 0,1,\dots,l}$ of elements of $X$ is a {\it $\delta$ chain} from $x_0$ to $x_l$ if $d(f(x_i),x_{i+1}) < \delta$ for all $i = 0,1,\dots,l-1$.
Then, $l$ is called the length of the chain.
A topological dynamical system $(X,f)$ is {\it chain mixing} if for every $\delta >0$ and for every pair $x,y \in X$, there exists a positive integer $N$ such that for all $n \geq N$, there exists a $\delta$ chain from $x$ to $y$ of length $n$.
Let $(\Lambda,\sigma)$ be a \tsubshiftt such that $\Lambda$ is homeomorphic to $C$. Let $X$ be homeomorphic to $C$ and $f$, a chain mixing element of $\End(X)$.
In a previous paper \cite[Theorem 1.1]{Shimomura1}, it was shown that the following conditions are equivalent:

\enuma
\item $\percon{\Lambda,\sigma}{X,f}$;
\item $(\Lambda,\sigma) \kinsetu (X,f)$.
\enumz
Let $(Y,g)$ be a topological dynamical system and $n \beposint$.
In this manuscript, we say that $g$ is {\it periodic} of period $n$ if $g^n = id_Y$, where $id_Y$ denotes the identity mapping on $Y$.
We say that $g$ is {\it aperiodic} if $g$ is not periodic.
Suppose that $g \in \Hom(Y)$ is periodic of period $n$ and that $(Y,g) \kinsetu (X,f)$ for some $f \in \End(X)$.
Then, it is easy to check that $f$ is also periodic of period $n$.
Note that even if $g$ is aperiodic, all the orbits of $g$ may be periodic.
This may happen if $g$ has periodic points of least period $n$ for infinitely many $n \beposint$.
In this manuscript, we shall show the following:
\begin{thm}\label{thm:main}
Let $X$ and $Y$ be homeomorphic to $C$; $f \in \End(X)$, chain mixing; and $g \in \Hom(Y)$, aperiodic.
Then, the following conditions are equivalent:
\enuma
\item $\percon{Y,g}{X,f}$;
\item $(Y,g) \kinsetu (X,f)$.
\enumz
\end{thm}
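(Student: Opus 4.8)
The implication (2)$\Rightarrow$(1) is the routine direction, and it is essentially the observation recorded just before the statement. If $\{\psi_k\}$ witnesses $(Y,g)\kinsetu(X,f)$ and $g^n(y_0)=y_0$, then each $\conjugacy{g}{\psi_k}$ has the period-$n$ point $\psi_k(y_0)$. Passing to a convergent subsequence of these points (using compactness of $X$) and using that $\conjugacy{g}{\psi_k}\to f$ uniformly, hence $(\conjugacy{g}{\psi_k})^n\to f^n$ uniformly, produces a period-$n$ point of $f$. Thus every $n\in\Per(Y,g)$ lies in $\Per(X,f)$, i.e. $\percon{Y,g}{X,f}$. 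No aperiodicity or chain mixing is needed here.

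For (1)$\Rightarrow$(2) the plan is to route the approximation through a \tsubshiftt and then appeal to the subshift case recalled above. The first ingredient I would isolate is that $\kinsetu$ is \emph{transitive}: if $(Y,g)\kinsetu(\Lambda,\sigma)$ and $(\Lambda,\sigma)\kinsetu(X,f)$, then $(Y,g)\kinsetu(X,f)$. This is a modulus-of-continuity argument in which the order of choices is crucial. Given $\epsilon>0$, I first fix a \emph{single} homeomorphism $\rho\colon\Lambda\to X$ with $d(\conjugacy{\sigma}{\rho},f)<\epsilon/2$; since this $\rho$ is now fixed, its uniform continuity furnishes $\eta>0$ such that $\eta$-close points of $\Lambda$ have $\epsilon/2$-close $\rho$-images. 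Only then do I choose a homeomorphism $\phi\colon Y\to\Lambda$ with $d(\conjugacy{g}{\phi},\sigma)<\eta$. Setting $\psi:=\rho\circ\phi$ and estimating $\conjugacy{g}{\psi}=\rho\circ(\conjugacy{g}{\phi})\circ\rho^{-1}$ against $\conjugacy{\sigma}{\rho}$ pointwise gives $d(\conjugacy{g}{\psi},f)<\epsilon$.

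The heart of the matter, and the step I expect to be the main obstacle, is a controlled subshift realization of $g$: I want a \tsubshiftt $(\Lambda,\sigma)$ with $\Lambda$ homeomorphic to $C$ such that $(Y,g)\kinsetu(\Lambda,\sigma)$ and at the same time $\percon{\Lambda,\sigma}{Y,g}$. I would construct $\sigma$ by coding $g$-itineraries through a refining sequence of finite clopen partitions of $Y$ (equivalently, through a sequence of graph covers built from Kakutani--Rokhlin tower partitions). Refining the partitions forces conjugates of $g$ arbitrarily close to the shift on the coding space, which is the approximation $(Y,g)\kinsetu(\Lambda,\sigma)$; ensuring the coding space is infinite and perfect makes $\Lambda$ homeomorphic to $C$. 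The delicate requirement is $\percon{\Lambda,\sigma}{Y,g}$: each periodic orbit of $\sigma$ arises from a cycle in the coding, and I must arrange the construction so that every such cycle is forced by a genuine periodic orbit of $g$, with no spurious short periods created at finite stages. This is exactly where aperiodicity of $g$ is used, through towers of arbitrarily large height that suppress spurious periods; making this suppression simultaneously compatible with the approximation is the real work of the proof.

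Granting this realization, the conclusion is immediate. From $\percon{\Lambda,\sigma}{Y,g}$ together with hypothesis (1), namely $\percon{Y,g}{X,f}$, I obtain $\percon{\Lambda,\sigma}{X,f}$. Since $f$ is chain mixing and $\Lambda$ is homeomorphic to $C$, the equivalence of \cite[Theorem~1.1]{Shimomura1} applies and yields $(\Lambda,\sigma)\kinsetu(X,f)$. Combining this with $(Y,g)\kinsetu(\Lambda,\sigma)$ via the transitivity established above gives $(Y,g)\kinsetu(X,f)$, which is condition (2). Thus the only nontrivial input is the controlled subshift realization; everything else is either a limiting argument or a direct appeal to the previously settled subshift case.
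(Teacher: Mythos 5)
Your direction (2)$\Rightarrow$(1) is fine, and your transitivity lemma is correct (it is exactly Lemma~\ref{lem:sequence-kinsetu} specialized to a constant sequence); your final assembly via \cite[Theorem~1.1]{Shimomura1} would also work \emph{if} your ``controlled subshift realization'' existed. The genuine gap is that realization step: the construction you propose cannot produce the object you need. The itinerary coding $\phi_{\V}\colon Y\to\Sigma(g,\V)$ with respect to a finite clopen partition $\V$ is a factor map, not a conjugacy; it is injective only when $g$ is expansive. So refining $\V$ does not force conjugates of $g$ close to the shift on one fixed coding space; it merely produces a \emph{sequence} of different factor subshifts $\Lambda_{\V}=\phi_{\V}(Y)$. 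This is fatal already for the simplest aperiodic example, the dyadic odometer ($y\mapsto y+1$ on $\varprojlim \Z/2^m\Z$): every finite clopen partition $\V$ of the odometer is coarsened by some cyclic partition $\{B_0,\dots,B_{2^m-1}\}$ with $g(B_i)=B_{i+1\bmod 2^m}$, so every itinerary, hence every point of $\Lambda_{\V}$, is $2^m$-periodic. Thus $\Lambda_{\V}$ is always a finite set of periodic points: it is never perfect (so never homeomorphic to $C$), no homeomorphism $Y\to\Lambda_{\V}$ exists at all (so $(Y,g)\kinsetu(\Lambda_{\V},\sigma)$ fails), and $\percon{\Lambda_{\V},\sigma}{Y,g}$ fails because $\Per(Y,g)=\kuu$ while $\Per(\Lambda_{\V},\sigma)\nekuu$. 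Tall Kakutani--Rokhlin towers can push spurious least periods up but can never remove them, whereas your architecture demands their complete absence: your two requirements force $\Per(\Lambda,\sigma)=\Per(Y,g)$, which for the odometer means an aperiodic Cantor subshift approximated by conjugates of an equicontinuous system. Such a $\Lambda$ cannot come from coding, and establishing its existence for an arbitrary aperiodic $g$ would be a substantial theorem in its own right --- arguably as hard as the result you are trying to prove.

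The paper's proof shows why this strong demand is also unnecessary: it absorbs spurious periods rather than eliminating them. It first approximates $f$ by \emph{mixing} subshifts of finite type $\Sigma=\Sigma(f,\W_i)$ (Lemma~\ref{lem:sft-kinji}), so the target of all subsequent constructions has periodic points of every sufficiently large period; then, for the coded factor $\Lambda=\phi_{\V}(Y)$, spurious periods below a threshold $m$ are removed by taking $\V$ sufficiently fine (small-period cycles in a fine coding must come from genuine periodic points of $g$), while spurious periods above $m$ lie in $\Per(\Sigma,\sigma)$ automatically; together with the hypothesis $\percon{Y,g}{X,f}$ this yields $\percon{\Lambda,\sigma}{\Sigma,\sigma}$. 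Crucially, the paper never needs $(Y,g)\kinsetu(\Lambda,\sigma)$: it needs only continuous commuting maps $\barphi_k\colon Y\to\Sigma$ whose images meet every element of a fine partition of $\Sigma$, and these are produced from $\Lambda$ by the marker-lemma construction (Lemma~\ref{lem:marker-mix}), with aperiodicity of $g$ used solely to guarantee that $\Lambda$ has an orbit that is non-periodic or of large least period. Lemma~\ref{lem:fine-kinsetu} then converts these commuting maps into the approximating conjugacies. In short, the missing idea is to route the argument through factor maps into a mixing SFT target, where spurious periods are harmless, instead of demanding a period-perfect subshift model of $g$.
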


In the previous theorem, suppose that $f$ has a fixed point.
Then, $\Per(X,f) = \Posint$.
Therefore, the following corollary is obtained:
\begin{cor}\label{cor:1}
Let $X$ and $Y$ be homeomorphic to $C$; $f \in \End(X)$, chain mixing; and $g \in \Hom(Y)$, aperiodic.
Suppose that $f$ has a fixed point. Then, $(Y,g) \kinsetu (X,f)$.
\end{cor}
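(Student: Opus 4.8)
The plan is to obtain the corollary as an immediate consequence of Theorem~\ref{thm:main}; the extra hypothesis that $f$ has a fixed point is used only to make the periodic-point comparison automatic.

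First I would record the elementary fact that a fixed point forces periodic points of every period. Indeed, if $x_0 \in X$ satisfies $f(x_0) = x_0$, then $f^n(x_0) = x_0$ for every $n \beposint$ by induction, so $n \in \Per(X,f)$ for all $n \beposint$. Since $\Per(X,f) \subseteq \Posint$ holds by definition, this gives $\Per(X,f) = \Posint$.

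Because $\Per(Y,g) \subseteq \Posint$ holds for any system, the equality $\Per(X,f) = \Posint$ yields the inclusion $\percon{Y,g}{X,f}$ for free; this is exactly condition~(1) of Theorem~\ref{thm:main}. The remaining hypotheses of that theorem are satisfied by assumption: $X$ and $Y$ are homeomorphic to $C$, $f \in \End(X)$ is chain mixing, and $g \in \Hom(Y)$ is aperiodic. Applying the equivalence $(1) \Leftrightarrow (2)$ then produces condition~(2), namely $(Y,g) \kinsetu (X,f)$, as required.

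I expect no genuine obstacle here, since all the substantive content lies in Theorem~\ref{thm:main}; the only step needing even a moment's thought is verifying that a single fixed point makes $\Per(X,f)$ all of $\Posint$, after which condition~(1) becomes vacuous and the conclusion is immediate.
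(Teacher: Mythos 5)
Your proposal is correct and matches the paper's own reasoning exactly: the paper likewise notes that a fixed point gives $\Per(X,f) = \Posint$, so the inclusion $\percon{Y,g}{X,f}$ holds automatically and Theorem~\ref{thm:main} yields $(Y,g) \kinsetu (X,f)$. No gaps; this is the intended one-line deduction.
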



\vspace{5mm}
{\sc Acknowledgments.}
The author would like to thank Professor K.~Shiraiwa for the valuable conversations and the suggestions regarding the first version of this manuscript.

\section{Preliminaries}
Although some lemmas in this section are listed in \cite{Shimomura1}, we show the proof here for conveniences.
A compact metrizable totally disconnected perfect space is homeomorphic to $C$.
Therefore, any non-empty closed and open subset of $C$ is homeomorphic to $C$.
Let $\Z$ denote the set of all integers.
Let $V= \{v_1, v_2, \dots, v_n \}$ be a finite set of $n$ symbols with discrete topology. Let $\Sigma(V) := V^{\Z}$ with the product topology.
Then, $\Sigma(V)$ is a compact metrizable totally disconnected perfect space; hence, it is homeomorphic to $C$. We define a homeomorphism $\sigma : \Sigma(V) \to \Sigma(V)$ as
\[ (\sigma(t))(i) = t(i+1)~\text{for all}~i \in \Z,~\text{where}~t = (t(i))_{i \in \Z} \in \Sigma(V). \]
The pair $(\Sigma(V),\sigma)$ is known as a {\it \tfullshiftt} of $n$ symbols.
If a closed set $\Lambda \subseteq \Sigma(V)$ is invariant under $\sigma$, i.e. $\sigma(\Lambda) = \Lambda$, then $(\Lambda,\sigma|_{\Lambda})$ is known as a {\it two-sided subshift}.
In this manuscript, $\sigma|_{\Lambda}$ is abbreviated to $\sigma$.
A finite sequence $u_1 u_2 \cdots u_l$ of elements of $V$ is called a {\it word}\ of length $l$.
For a word $u$ of length $l$ and $m \beint$, we define the cylinder $C_m(u) \subseteq \Lambda$ as
\[ C_m(u) := \{ t \in \Lambda ~|~ t(m + j - 1) = u_{j} \myforall 1 \leq j \leq l \}. \]
%
%
Let $(X,f)$ be a topological dynamical system such that $X$ is homeomorphic to $C$.
Let $\U$ be a finite partition of $X$ by non-empty closed and open subsets.
In this manuscript, we consider partitions that are not trivial, i.e., they consist of more than one element.
We define a directed graph $G = G(f,\U)$ as follows:
\enuma
\item $G$ has the set of vertices $V(f,\U) = \U$
\item $G$ has the set of directed edges $E(f,\U) \subseteq \U \times \U$ such that\[(U,U')\in E(f,\U) \myiff f(U) \cap U' \neq \kuu. \]
\enumz
Note that all elements of $V(f,\U)$ have at least one outdegree and at least one indegree.
Let $G = (V,E)$ be a directed graph, where $V$ is a finite set of vertices and $E \subseteq V \times V$ is a set of directed edges.
$\Sigma(G)$ denotes the \tsubshiftt defined as
\[ \Sigma(G) := \{ t \in V^{\Z} ~|~ (t(i), t(i+1)) \in E \myforall i \in \Z \}. \]
A \tsubshiftt is said to be of {\it finite type} if it is topologically conjugate to $(\Sigma(G),\sigma)$ for some directed graph $G$.
Throughout this manuscript, unless otherwise stated, we assume that all the vertices appear in some element of $\Sigma(G)$, i.e., all the vertices of $G$ have at least one outdegree and at least one indegree.
For the sake of conciseness, we write $(\Sigma(f,\U),\sigma)$ instead of $(\Sigma(G(f,\U)),\sigma)$.
The next lemma follows:
\begin{lem}\label{lem:percon}
Let $(X,f)$ be a topological dynamical system such that $X$ is homeomorphic to $C$.
Let $\U$ be a partition of $X$ by non-empty closed and open subsets of $X$. Then, $\percon{X,f}{\Sigma(f,\U),\sigma}$.
\end{lem}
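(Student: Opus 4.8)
The plan is to take each period $n \in \Per(X,f)$ and manufacture from it a point of $\Sigma(f,\U)$ fixed by $\sigma^n$. Fix $n \beposint$ together with a point $x \in X$ satisfying $f^n(x) = x$. Although $f$ is only a continuous surjection and need not be invertible, the forward orbit $x, f(x), \dots, f^{n-1}(x)$ closes up into a cycle because $f^n(x) = x$, so I may unambiguously define $f^i(x)$ for every $i \beint$ by reducing $i$ modulo $n$; these points then satisfy $f(f^i(x)) = f^{i+1}(x)$ for all $i \beint$.

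Next I would read off the itinerary of this bi-infinite orbit through the partition. Since $\U$ partitions $X$, each point $f^i(x)$ lies in exactly one member of $\U$, which I call $t(i)$. This defines a sequence $t = (t(i))_{i \beint} \in \U^{\Z}$. I then verify admissibility, i.e. that $t \in \Sigma(G(f,\U))$: for each $i$ we have $f^{i+1}(x) \in f(t(i))$ because $f^i(x) \in t(i)$, and simultaneously $f^{i+1}(x) \in t(i+1)$ by construction, so $f(t(i)) \cap t(i+1) \nekuu$, which by the defining condition of $E(f,\U)$ means exactly that $(t(i),t(i+1)) \in E(f,\U)$.

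Finally I would check the period. Because $f^{i+n}(x) = f^i(f^n(x)) = f^i(x)$, the two points lie in the same partition element, so $t(i+n) = t(i)$ for every $i \beint$; that is, $\sigma^n(t) = t$. Hence $n \in \Per(\Sigma(f,\U),\sigma)$, and as $n$ was an arbitrary element of $\Per(X,f)$, the inclusion $\percon{X,f}{\Sigma(f,\U),\sigma}$ follows.

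The argument is essentially a one-line itinerary construction, so I expect no serious obstacle. The only point demanding genuine care is that $f$ is merely a continuous onto mapping rather than a homeomorphism, so the bi-infinite orbit entering the symbolic sequence must be defined through periodicity rather than by applying an inverse. I would also remark that $n$ need not be the \emph{least} period of $t$ under $\sigma$; but the definition of $\Per$ only requires the existence of some point fixed by $\sigma^n$, so this causes no difficulty.
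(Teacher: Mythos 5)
Your proof is correct and is essentially the same as the paper's: both read off the itinerary of the periodic orbit $x, f(x), \dots, f^{n-1}(x)$ through the partition $\U$, observe that consecutive itinerary symbols are joined by edges of $G(f,\U)$ because $f^{i+1}(x) \in f(t(i)) \cap t(i+1)$, and extend the resulting cycle periodically to a point of $\Sigma(f,\U)$ fixed by $\sigma^n$. The paper states this more tersely (giving only the finite cycle $U_0,\dots,U_n$ with $U_n = U_0$), while you spell out the bi-infinite sequence and the admissibility check explicitly, but the argument is identical.
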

\begin{proof}
Let $x \in X$ be a periodic point of period $n$ under $f$.
Then, there exists a sequence $\{ U_i\}_{i=0,1,\cdots,n}$ of elements of $\U$ such that $U_n = U_0$ and $f(U_i) \cap U_{i+1} \neq \kuu$ for all $ i = 0,1,\cdots,n-1$.
Thus, $(\Sigma(f,\U),\sigma)$ has a periodic point of period $n$.
\end{proof}
%
\begin{lem}[{\rm Lemma 1.3 of R.~Bowen \cite{Bowen}}]\label{lem:bowen}
Let $G = (V,E)$ be a directed graph.
Suppose that every vertex of $V$ has at least one outdegree and at least one indegree.
Then, $\Sigma(G)$ is topologically mixing if and only if there exists an $N \beposint$ such that for any pair of vertices $u$ and $v$ of $V$, there exists a path from $u$ to $v$ of length $n \geq N$.
\end{lem}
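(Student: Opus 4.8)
The plan is to exploit the standard dictionary between topological mixing of the vertex shift $(\Sigma(G),\sigma)$ and connectivity in $G$, working throughout with cylinder sets, which form a basis for the topology of $\Sigma(G)$. Recall that $(\Sigma(G),\sigma)$ being \emph{topologically mixing} means that for every pair of non-empty open sets $A,B$ there is an integer $N$ with $\sigma^n(A)\cap B \noemp$ for all $n \geq N$. Since every non-empty open set contains a non-empty cylinder, and since $A' \subseteq A$, $B' \subseteq B$ implies $\sigma^n(A')\cap B' \subseteq \sigma^n(A)\cap B$, it is enough to verify mixing for cylinders $C_0(a)$ and $C_0(b)$ determined by admissible words (finite paths) $a = a_1\cdots a_p$ and $b = b_1\cdots b_q$; a power of $\sigma$ reduces an arbitrary pair of cylinders $C_m(a)$, $C_{m'}(b)$ to this case. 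The observation I would record first is that a point lies in $\sigma^n(C_0(a))\cap C_0(b)$ exactly when $a$ sits in the coordinate window to the left of $b$ and the terminal symbol $a_p$ is joined to the initial symbol $b_1$ by a path in $G$ whose length $L(n)$ is determined by $n$ and grows linearly with it; one such point is produced by concatenating $a$, a connecting path, and $b$, and then extending the resulting finite admissible word to a bi-infinite one, which is possible because every vertex has positive in- and out-degree.

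For the implication ``graph condition $\Rightarrow$ mixing'', I would fix $C_0(a)$ and $C_0(b)$, compute the required length $L(n)$ so that the $a$-block and the $b$-block are disjoint and correctly aligned, and note that $L(n) \to \infty$. For all sufficiently large $n$ the two blocks are disjoint and $L(n) \geq N_0$, where $N_0$ is the constant supplied by the hypothesis; the hypothesis then yields a path of length exactly $L(n)$ from $a_p$ to $b_1$, and the concatenate-and-extend construction of the previous paragraph delivers a point of $\sigma^n(C_0(a))\cap C_0(b)$. This establishes mixing.

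For the converse ``mixing $\Rightarrow$ graph condition'', I would apply the mixing property to the single-coordinate cylinders $A = C_0(u)$ and $B = C_0(v)$, which are non-empty and open because $u$ and $v$ each lie on some bi-infinite path. Mixing furnishes an $N_{u,v}$ such that $\sigma^n(C_0(u))\cap C_0(v) \noemp$ for every $n \geq N_{u,v}$, and unwinding the coordinates shows that any such point restricts to a path of length $n$ from $u$ to $v$. Setting $N := \max_{u,v \in V} N_{u,v}$, which is finite since $V$ is finite, produces the single uniform constant demanded by the statement.

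The structural inputs are few and clean: cylinders form a basis (this reduces mixing to path-connection statements), positive in- and out-degree permits bi-infinite extension of finite paths, and finiteness of $V$ converts the pointwise constants $N_{u,v}$ into one uniform $N$. The only error-prone part is the index bookkeeping in the first observation: one must track how $\sigma^n$ translates the $a$-window, choose $L(n)$ so the windows abut without overlapping, and check that the endpoints of the connecting path agree with $a_p$ and $b_1$ so that the concatenation is genuinely admissible. I expect the main obstacle to be phrasing that observation sharply enough that both implications drop out of it with no further work.
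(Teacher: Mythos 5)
Your proof is correct, but note that the paper does not actually prove this lemma at all: its ``proof'' is a one-line citation to Lemma 1.3 of Bowen \cite{Bowen}. What you have written is the standard self-contained argument that the citation points to, and it holds up under scrutiny. The reduction to cylinders is legitimate (cylinders are a clopen basis, and shifting by a power of $\sigma$ normalizes the base coordinate at the cost of an irrelevant translation of the mixing threshold); the index bookkeeping you flag as the delicate point works out, since for $a$ of length $p$ placed by $\sigma^n$ at coordinates $-n,\dots,-n+p-1$ and $b$ at $0,\dots,q-1$, the connecting path must have length $L(n)=n-p+1$, which is eventually $\geq N$ and realizes every sufficiently large $n$; and the bi-infinite extension of the concatenated word is exactly where the standing in-degree/out-degree hypothesis enters. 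Your converse, using single-symbol cylinders $C_0(u)$, $C_0(v)$ (nonempty precisely because of that same hypothesis) and taking a maximum of $N_{u,v}$ over the finitely many pairs, is also the expected argument. One point worth making explicit: the lemma's phrasing ``there exists a path from $u$ to $v$ of length $n\geq N$'' is ambiguous as written, and under the literal reading (some path of some length $\geq N$) the implication toward mixing would be false --- e.g.\ a graph all of whose $u$-to-$v$ path lengths have a fixed parity satisfies it without being mixing. You silently adopted the intended reading (a path of length $n$ for \emph{every} $n\geq N$), which is how the paper itself uses the lemma in Lemmas \ref{lem:mix-Cantor} and \ref{lem:sft-kinji}; your use of ``a path of length exactly $L(n)$'' depends on this, so it deserves a sentence of justification or at least an explicit restatement of the hypothesis. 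What your approach buys over the paper's is self-containedness; what the citation buys is brevity, which is why the author deferred to Bowen.
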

\begin{proof}See Lemma 1.3 of R.~Bowen \cite{Bowen}.
\end{proof}
%
Let $K \subseteq X$.
The diameter of $K$ is defined as $\diam (K) := \sup \{d(x,y) ~|~ x,y \in K\}$.
We define $\mesh(\U) :=  \max \{ \diam(U) ~|~ U \in \U \}$.
%
%
\begin{lem}\label{lem:cont} Let $(X,d)$ be a compact metric space and $f : X \to X$, a continuous mapping.
Then, for any $\epsilon > 0$, there exists $\delta = \delta(f,\epsilon) > 0$ such that
\[\delta < \frac{\epsilon}{2};\] 
\[\text{if}~d(x,y) \leq \delta,~\text{then}~d(f(x),f(y)) < \frac{\epsilon}{2}\ \ \text{for all}\ \ x,y \in X.\]
\end{lem}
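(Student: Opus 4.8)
The plan is to invoke the Heine--Cantor theorem: a continuous mapping from a compact metric space into a metric space is uniformly continuous. Since $X$ is compact and $f : X \to X$ is continuous, $f$ is therefore uniformly continuous, and the statement of the lemma is essentially just this fact, packaged together with the two harmless normalizations $\delta < \epsilon/2$ and the use of the closed inequality $d(x,y) \leq \delta$ in the hypothesis.

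Concretely, I would first apply uniform continuity to the positive number $\epsilon/2$: there exists $\delta_0 > 0$ such that $d(f(x),f(y)) < \epsilon/2$ whenever $d(x,y) < \delta_0$, for all $x,y \in X$. Then I would set $\delta := \tfrac{1}{2}\min\{\delta_0, \epsilon/2\}$, so that simultaneously $\delta < \delta_0$ and $\delta < \epsilon/2$. With this choice, if $d(x,y) \leq \delta$ then in particular $d(x,y) < \delta_0$, whence $d(f(x),f(y)) < \epsilon/2$; and the required bound $\delta < \epsilon/2$ holds by construction. This clearly exhibits the desired $\delta = \delta(f,\epsilon)$.

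If one prefers a self-contained argument rather than citing Heine--Cantor, the uniform continuity can be derived directly from compactness: for each $x \in X$, continuity yields a radius $r_x > 0$ with $d(f(x),f(z)) < \epsilon/4$ whenever $d(x,z) < r_x$; the open balls $B(x, r_x/2)$ cover $X$, so by compactness finitely many of them suffice, and a Lebesgue-number and triangle-inequality argument then produces a single uniform $\delta$. Either route is entirely routine. I do not expect any genuine obstacle; the only points needing a line of care are matching the closed inequality $d(x,y) \leq \delta$ against the open inequality produced by uniform continuity (handled by taking $\delta$ strictly below $\delta_0$) and simultaneously enforcing $\delta < \epsilon/2$ (handled by the minimum).
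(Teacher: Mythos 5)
Your proof is correct and takes the same approach as the paper, which simply states that the lemma ``directly follows from the uniform continuity of $f$''; your version just spells out the routine details (choosing $\delta$ strictly below both $\delta_0$ and $\epsilon/2$ to handle the closed inequality) that the paper leaves implicit.
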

\begin{proof}
This lemma directly follows from the uniform continuity of $f$.
\end{proof}
For two directed graphs $G=(V,E)$ and $G'=(V',E')$, $G$ is said to be a {\it subgraph} of $G'$ if $V \subseteq V'$ and $E \subseteq E'$.
\begin{lem}\label{lem:contgraph}
Let $(X,d)$ be a compact metric space; $f : X \to X$, a continuous mapping; and $\epsilon > 0$. Let $\delta = \delta(f,\epsilon)$ be as in lemma \ref{lem:cont} and $\U$, a finite covering of X such that $\mesh(\U) < \delta$.
Let $g~:~X~\to~X$ be a mapping such that $G(g,\U)$ is a subgraph of $G(f,\U)$.
Then, $d(f,g) < \epsilon$.
\end{lem}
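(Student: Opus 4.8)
The plan is to estimate $d(f(x),g(x))$ for an arbitrary $x \in X$ and then pass to the supremum, using the subgraph hypothesis to locate, near $x$, a point whose $f$-image lands in the same member of $\U$ as $g(x)$.

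First I would fix $x \in X$. Since $\U$ covers $X$, I can pick $U \in \U$ containing $x$ and $U' \in \U$ containing $g(x)$. Then $g(x) \in g(U) \cap U'$, so $g(U) \cap U' \neq \kuu$ and hence $(U,U') \in E(g,\U)$. This is where the hypothesis enters: since $G(g,\U)$ is a subgraph of $G(f,\U)$ and both share the vertex set $\U$, the edge inclusion $E(g,\U) \subseteq E(f,\U)$ gives $(U,U') \in E(f,\U)$, that is, $f(U) \cap U' \neq \kuu$. Choosing a witness $w \in U$ with $f(w) \in U'$ then sets up a triangle inequality comparing $f(x)$ and $g(x)$ through the intermediate point $f(w)$.

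The estimate itself is routine. From $x,w \in U$ I get $d(x,w) \leq \mesh(\U) < \delta$, so Lemma~\ref{lem:cont} gives $d(f(x),f(w)) < \epsilon/2$; from $f(w),g(x) \in U'$ I get $d(f(w),g(x)) \leq \mesh(\U)$. Adding these, $d(f(x),g(x)) < \epsilon/2 + \mesh(\U)$. The one subtlety I anticipate is passing from this pointwise bound to the \emph{strict} inequality $d(f,g) < \epsilon$: because $\mesh(\U) < \delta < \epsilon/2$ is a constant independent of $x$, the pointwise bound yields $d(f,g) = \sup_{x \in X} d(f(x),g(x)) \leq \epsilon/2 + \mesh(\U) < \epsilon$. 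Beyond this, the only thing to keep straight is the bookkeeping of which member of $\U$ contains each of $x$, $g(x)$, and the witness $w$, together with the observation that continuity of $g$ is never used, so the conclusion holds for an arbitrary mapping $g$.
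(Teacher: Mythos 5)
Your proof is correct and is essentially the paper's own argument: both pick a witness $y\in U$ with $f(y)\in U'$ from the subgraph hypothesis and apply the triangle inequality together with Lemma~\ref{lem:cont}. Your extra care in passing to the supremum (noting the bound $\epsilon/2+\mesh(\U)$ is a constant strictly below $\epsilon$) is a minor refinement the paper leaves implicit, not a different method.
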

\begin{proof}
Let $x \in X$. Then, $x \in U$ and $g(x) \in U'$ for some $U,U' \in \U$. Because $G(g,\U)$ is a subgraph of $G(f,\U)$, there exists a $y \in U$ such that $f(y) \in U'$. Therefore, from lemma \ref{lem:cont}, it follows that
\[d(f(x),g(x)) \leq d(f(x),f(y)) + d(f(y),g(x)) < \frac{\epsilon}{2} + \diam(U') < \epsilon. \]
\end{proof}
From this lemma, the next lemma follows directly.
\begin{lem}\label{lem:subgraph-kinji}
Let $(X,d)$ be a compact metric space; $f : X \to X$, a continuous mapping; and $\{\U_k\}_{k \beposint}$, a sequence of coverings of $X$ such that $\mesh(\U_k) \to 0$ as $k \to \infty$.
Let $\{g_k\}_{k=1,2,\dots}$ be a sequence of mappings from $X$ to $X$ such that $G(g_k,\U_k)$ is a subgraph of $G(f,\U_k)$ for all $k$.
Then, $g_k \to f$ as $k \to \infty$. 
\end{lem}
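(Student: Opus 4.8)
The plan is to read this lemma as nothing more than a uniform repackaging of Lemma \ref{lem:contgraph}, and to prove it by a standard $\epsilon$--$K$ argument. The one point to keep track of is that the threshold $\delta$ supplied by Lemma \ref{lem:cont} depends only on $f$ and $\epsilon$, and \emph{not} on the index $k$; this index-independence is precisely what lets a single choice of $K$ work simultaneously for all large $k$.

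First I would fix an arbitrary $\epsilon > 0$ and apply Lemma \ref{lem:cont} to obtain the associated modulus $\delta = \delta(f,\epsilon) > 0$. Next, using the hypothesis $\mesh(\U_k) \to 0$ as $k \to \infty$, I would choose an index $K \beposint$ such that $\mesh(\U_k) < \delta$ for every $k \geq K$. Then, for each such $k$, the covering $\U_k$ meets the mesh requirement of Lemma \ref{lem:contgraph} with this same $\delta$, and by hypothesis $G(g_k,\U_k)$ is a subgraph of $G(f,\U_k)$; hence Lemma \ref{lem:contgraph} applies directly and yields $d(f,g_k) < \epsilon$. Since this holds for all $k \geq K$, and $\epsilon > 0$ was arbitrary, we conclude $d(f,g_k) \to 0$, i.e.\ $g_k \to f$ as $k \to \infty$.

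I do not expect any genuine obstacle here: the statement is a routine limit version of the already-proved Lemma \ref{lem:contgraph}, and the author's remark that ``the next lemma follows directly'' is accurate. The only mildly delicate bookkeeping is confirming that $\delta(f,\epsilon)$ is chosen once and then the tail condition $\mesh(\U_k) < \delta$ is invoked for all $k \geq K$; were $\delta$ allowed to vary with $k$, the uniform bound would collapse, so I would make the order of quantifiers (fix $\epsilon$, then $\delta$, then $K$) explicit in the write-up.
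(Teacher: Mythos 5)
Your proof is correct and is exactly the argument the paper intends: the paper gives no separate proof, stating only that the lemma ``follows directly'' from Lemma \ref{lem:contgraph}, and your $\epsilon$--$\delta$--$K$ unpacking (with $\delta = \delta(f,\epsilon)$ fixed independently of $k$) is the standard way to make that explicit. Nothing is missing.
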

%
\begin{lem}\label{lem:fine-kinsetu}
Let $(X_1,f_1)$ and $(X_2,f_2)$ be topological dynamical systems such that both $X_1$ and $X_2$ are homeomorphic to $C$.
Let $\{ \U_k \}_{k \beposint}$ be a sequence of finite partitions by non-empty closed and open subsets of $X_1$ such that $\mesh(\U_k) \to 0$ as $k \to \infty$.
Let $\{\pi_k\}_{k \beposint}$ be a sequence of continuous commuting mappings from $X_2$ to $X_1$.
Suppose that for all $k \beposint$, $\pi_k(X_2) \cap U \nekuu$ for all $U \in \U_k$.
Then, $(X_2,f_2) \kinsetu (X_1,f_1)$.
\end{lem}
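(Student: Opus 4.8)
The plan is to build, for each $k$, a homeomorphism $\psi_k : X_2 \to X_1$ adapted to the clopen decomposition of $X_2$ obtained by pulling back $\U_k$ along $\pi_k$, and then to deduce convergence from Lemma \ref{lem:subgraph-kinji}. Recall that $(X_2,f_2) \kinsetu (X_1,f_1)$ means precisely that there is a sequence of homeomorphisms $\psi_k : X_2 \to X_1$ with $\conjugacy{f_2}{\psi_k} \to f_1$. Writing $g_k := \conjugacy{f_2}{\psi_k}$, it therefore suffices to arrange, for every $k$, that $G(g_k,\U_k)$ be a subgraph of $G(f_1,\U_k)$; the hypothesis $\mesh(\U_k)\to 0$ will then let Lemma \ref{lem:subgraph-kinji} finish the argument.

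First I would pull back the partition. For each $k$, put $\V_k := \{\pi_k^{-1}(U) \mid U \in \U_k\}$. Since $\U_k$ is a partition of $X_1$ into non-empty clopen sets and $\pi_k$ is continuous, $\V_k$ is a partition of $X_2$ into clopen sets, and the hypothesis $\pi_k(X_2) \cap U \neq \emptyset$ ensures each $\pi_k^{-1}(U)$ is non-empty. Because every non-empty clopen subset of a space homeomorphic to $C$ is itself homeomorphic to $C$, for each $U \in \U_k$ I may choose a homeomorphism from $\pi_k^{-1}(U)$ onto $U$; gluing these finitely many homeomorphisms over $\V_k$ produces a homeomorphism $\psi_k : X_2 \to X_1$ satisfying $\psi_k(\pi_k^{-1}(U)) = U$, equivalently $\psi_k^{-1}(U) = \pi_k^{-1}(U)$, for every $U \in \U_k$.

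Next I would check the subgraph inclusion using the commuting relation $\pi_k \circ f_2 = f_1 \circ \pi_k$. Suppose $(U,U') \in E(g_k,\U_k)$, so $g_k(U) \cap U' \neq \emptyset$. Since $\psi_k^{-1}(U) = \pi_k^{-1}(U)$, there is a point $z \in \pi_k^{-1}(U)$ with $\psi_k(f_2(z)) \in U'$; as $\psi_k$ carries $\pi_k^{-1}(U')$ onto $U'$, this forces $f_2(z) \in \pi_k^{-1}(U')$, i.e. $\pi_k(f_2(z)) \in U'$. The commuting relation then gives $f_1(\pi_k(z)) \in U'$ with $\pi_k(z) \in U$, whence $f_1(U) \cap U' \neq \emptyset$ and $(U,U') \in E(f_1,\U_k)$. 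Thus $G(g_k,\U_k)$ is a subgraph of $G(f_1,\U_k)$ for every $k$, and Lemma \ref{lem:subgraph-kinji} yields $g_k \to f_1$, which is exactly $(X_2,f_2) \kinsetu (X_1,f_1)$.

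The only step I expect to require any real thought is the choice of $\psi_k$: the key realization is that $\psi_k$ must send each pullback piece $\pi_k^{-1}(U)$ onto the corresponding $U$, so that $\psi_k$ and $\pi_k$ agree at the level of the partition $\U_k$. Once this compatibility is imposed, the subgraph inclusion drops out directly from commutativity, and the remaining ingredients---that non-empty clopen subsets of $C$ are Cantor sets, and the graph-approximation Lemma \ref{lem:subgraph-kinji}---are already in hand.
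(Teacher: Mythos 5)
Your proposal is correct and follows essentially the same route as the paper's proof: construct $\psi_k$ so that $\psi_k(\pi_k^{-1}(U)) = U$ for each $U \in \U_k$ (using that non-empty clopen subsets of $C$ are homeomorphic to $C$), verify via the commuting relation that $G(\conjugacy{f_2}{\psi_k},\U_k)$ is a subgraph of $G(f_1,\U_k)$, and conclude with Lemma \ref{lem:subgraph-kinji}. Your write-up merely makes explicit the subgraph verification that the paper states in one line.
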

\begin{proof}
Let $k \beposint$. Let $U \in \U_k$.
Because $\pi_k(X_2) \cap U \nekuu$, $\pi_k^{-1}(U)$ is a non-empty closed and open subset of $X_2$.
Both $\pi_k^{-1}(U)$ and $U$ are homeomorphic to $C$.
Therefore, there exists a homeomorphism $\psi_k : X_2 \to X_1$ such that $\psi_k(\pi_k^{-1}(U)) = U$ for all $U \in \U_k$.
Because $\pi_k$ is commuting, $\pi_k(f_2(\pi_k^{-1}(U))) \cap U' \nekuu $ only if $f_1(U) \cap U' \nekuu $.
Let $g_k = \conjugacy{f_2}{\psi_k}$.
Then, from the construction of $\psi_k$, $G(g_k,\U_k)$ is a subgraph of $G(f_1,\U_k)$. 
Because $k \beposint$ is arbitrary, from lemma \ref{lem:subgraph-kinji}, we get the result.
\end{proof}
%
%
%
\begin{lem}\label{lem:sequence-kinsetu}
Let $(X_1,f_1)$ and $(X_2,f_2)$ be topological dynamical systems.
Let $(Y_k,g_k)$ $(k \beposint)$ be a sequence of topological dynamical systems.
Suppose that there exists a sequence of homeomorphisms $\psi_k : Y_k \to X_1$ such that $\conjugacy{g_k}{\psi_k} \to f_1$ as $k \to \infty$ and that $(X_2,f_2) \kinsetu (Y_k,g_k)$ for all $k = 1,2,\dots$.
Then, $(X_2,f_2) \kinsetu (X_1,f_1)$.
\end{lem}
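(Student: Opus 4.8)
The plan is to build the required conjugating homeomorphisms $X_2 \to X_1$ by a diagonal argument: for each index I compose an approximating conjugacy $X_2 \to Y_k$ with the given homeomorphism $\psi_k : Y_k \to X_1$, and I control the error by choosing the approximating conjugacy so fine that the uniform continuity of $\psi_k$ transports its closeness to $g_k$ over to closeness of the composite to $f_1$.

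First I would unwind the hypotheses. By the definition of $\kinsetu$, for each $k \beposint$ the relation $(X_2,f_2) \kinsetu (Y_k,g_k)$ furnishes a sequence of homeomorphisms $\phi_{k,j} : X_2 \to Y_k$ with $\conjugacy{f_2}{\phi_{k,j}} \to g_k$ in $\End(Y_k)$ as $j \to \infty$. The remaining hypothesis says that the given homeomorphisms $\psi_k : Y_k \to X_1$ satisfy $d(\conjugacy{g_k}{\psi_k}, f_1) \to 0$. My goal is a single sequence $h_k : X_2 \to X_1$ of homeomorphisms with $\conjugacy{f_2}{h_k} \to f_1$, and the natural candidate is $h_k := \psi_k \circ \phi_{k,j(k)}$ for a suitably fast-growing choice $j(k)$; each $h_k$ is a composition of homeomorphisms, hence a homeomorphism from $X_2$ onto $X_1$.

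The selection of $j(k)$ is where uniform continuity enters. Fix $k$. Since $Y_k$ is compact and $\psi_k$ is continuous, $\psi_k$ is uniformly continuous, so there is $\delta_k > 0$ such that $d(y,y') < \delta_k$ implies $d(\psi_k(y),\psi_k(y')) < 1/k$. Because $\conjugacy{f_2}{\phi_{k,j}} \to g_k$ as $j \to \infty$, I may then choose $j(k)$ so large that $d(\conjugacy{f_2}{\phi_{k,j(k)}}, g_k) < \delta_k$. Writing $a_k := \conjugacy{f_2}{\phi_{k,j(k)}} \in \End(Y_k)$, a direct computation gives $\conjugacy{f_2}{h_k} = \conjugacy{a_k}{\psi_k}$, so after the substitution $y = \psi_k^{-1}(x)$ I obtain
\[ d(\conjugacy{a_k}{\psi_k}, \conjugacy{g_k}{\psi_k}) = \sup_{y \in Y_k} d(\psi_k(a_k(y)), \psi_k(g_k(y))) \leq \frac{1}{k}, \]
since $d(a_k(y), g_k(y)) \le d(a_k,g_k) < \delta_k$ for every $y \in Y_k$.

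Finally I would assemble the estimate by the triangle inequality,
\[ d(\conjugacy{f_2}{h_k}, f_1) \le d(\conjugacy{a_k}{\psi_k}, \conjugacy{g_k}{\psi_k}) + d(\conjugacy{g_k}{\psi_k}, f_1) \le \frac{1}{k} + d(\conjugacy{g_k}{\psi_k}, f_1), \]
and both terms tend to $0$, the first by construction and the second by hypothesis. Hence $\conjugacy{f_2}{h_k} \to f_1$, which is exactly $(X_2,f_2) \kinsetu (X_1,f_1)$. I expect the only real subtlety to be the diagonal bookkeeping: the approximation of $g_k$ by conjugates of $f_2$ holds only in the limit $j \to \infty$ for each fixed $k$, so the tolerance $\delta_k$ must be fixed \emph{before} $j(k)$ is chosen, and it is the $k$-dependent uniform continuity of $\psi_k$ — rather than any uniform modulus across all $k$ — that legitimizes this per-index choice.
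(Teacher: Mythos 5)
Your proof is correct and follows essentially the same route as the paper's: use uniform continuity of $\psi_k$ to fix a tolerance $\delta$, invoke $(X_2,f_2) \kinsetu (Y_k,g_k)$ to get a conjugating homeomorphism within $\delta$ of $g_k$, compose with $\psi_k$, and finish by the triangle inequality. The only difference is presentational — you assemble an explicit diagonal sequence $h_k$ with error $1/k$, while the paper phrases the same estimate as an $\epsilon$-argument for large $k$.
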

\begin{proof}
Let $\epsilon > 0$.
There exists an $N \beposint$ such that $d(\conjugacy{g_k}{\psi_k}, f_1) < \epsilon / 2$ for all $k > N$.
Fix $k > N$.
Let $\delta > 0$ be such that if $d(y,y') < \delta$, then $d(\psi_k(y),\psi_k(y')) < \epsilon / 2$.
Because $(X_2,f_2) \kinsetu (Y_k,g_k)$, there exists a homeomorphism $\psi' : X_2 \to Y_k$ such that $d(\conjugacy{f_2}{\psi'},g_k) < \delta$.
Then, we find that $d(\conjugacy{f_2}{(\psi_k \circ \psi')},f) < d( \psi_k \circ (\psi' \circ f_2 \circ \psi'^{-1}) \circ \psi_k^{-1},\conjugacy{g_k}{\psi_k}) + d(\conjugacy{g_k}{\psi_k},f_1) < \epsilon$.
\end{proof}
%
\begin{lem}\label{lem:mix-Cantor}
Let $G = (V,E)$ be a directed graph.
Suppose that every vertex of $G$ has at least one outdegree and at least one indegree.
Suppose that $\Sigma(G)$ is topologically mixing and that $\Sigma(G)$ is not a single point.
Then, $\Sigma(G)$ is homeomorphic to $C$.
\end{lem}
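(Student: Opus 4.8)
The plan is to verify that $\Sigma(G)$ satisfies the topological characterization of the Cantor set recalled at the start of Section~2: a compact, metrizable, totally disconnected, perfect space is homeomorphic to $C$. Three of these four properties come for free. Since $\Sigma(G)$ is a closed subset of the full shift $V^{\Z}$, and $V^{\Z}$ is compact, metrizable, and totally disconnected, so is $\Sigma(G)$. Thus the entire content of the lemma is the assertion that $\Sigma(G)$ is \emph{perfect}, i.e. has no isolated points; everything else is routine inheritance.

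To prove perfectness I would fix $t \in \Sigma(G)$ and an arbitrary window $[-k,k]$, and produce a point $t' \in \Sigma(G)$ with $t' \neq t$ that agrees with $t$ on $[-k,k]$; since the cylinders determined by such windows form a neighborhood basis of $t$, this shows that $t$ is not isolated. The key structural input is that $G$ must contain a vertex of out-degree at least $2$ or in-degree at least $2$. Indeed, if every vertex had out-degree and in-degree exactly $1$ (recall each is at least $1$ by hypothesis), then $G$ would be a disjoint union of cycles and $\Sigma(G)$ would be finite; mixing forces transitivity, hence a single cycle, and a single cycle is mixing only when it is one point, contradicting the hypothesis that $\Sigma(G)$ is not a single point. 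By the left--right symmetry of the shift we may assume there is a vertex $c$ with two distinct outgoing edges $c \to d_1$ and $c \to d_2$.

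With this in hand the construction is short. Applying Lemma~\ref{lem:bowen} (using that $\Sigma(G)$ is mixing) I would fix a path in $G$ from $t(k)$ to $c$ and lay it down immediately to the right of the window; at $c$ I then branch once to $d_1$ in one case and to $d_2$ in the other, and extend arbitrarily to the right (every vertex has out-degree $\geq 1$) and to the left of $-k$ (every vertex has in-degree $\geq 1$), using the \emph{same} left extension and the same path to $c$ in both cases. This yields two points $t', t'' \in \Sigma(G)$ that agree with $t$ on $[-k,k]$ yet differ from one another at the coordinate just after $c$; hence at least one of them differs from $t$, giving the desired neighbor.

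The main obstacle is precisely this perfectness step, and within it the temptation to argue by counting periodic points of coprime periods obtained from return loops of lengths $N$ and $N+1$ supplied by Lemma~\ref{lem:bowen}; this almost works but degenerates exactly when $t$ is a constant sequence $\ldots ccc \ldots$, where both candidate loops can collapse onto $t$ and the period--gcd bookkeeping becomes delicate. Recognizing that the clean route around this is the existence of a genuine branch vertex---which mixing, together with the hypothesis that $\Sigma(G)$ is not a single point, guarantees---is the crux. Once the branch vertex is located, reachability from the window via Lemma~\ref{lem:bowen} and free extension at vertices of positive degree finish the argument with no case analysis.
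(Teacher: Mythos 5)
Your proposal is correct and follows essentially the same route as the paper: both reduce the lemma to showing that $\Sigma(G)$ is perfect (compactness, metrizability and total disconnectedness being inherited from $V^{\Z}$), and both invoke Lemma \ref{lem:bowen} to get reachability between vertices. The paper dismisses the non-isolation step as ``easy to check,'' whereas you supply a complete and valid argument for exactly that step via the branch vertex $c$ with edges $c \to d_1$ and $c \to d_2$ (whose existence you correctly derive from mixing plus $\Sigma(G)$ not being a single point), producing two points agreeing with $t$ on $[-k,k]$ of which at least one differs from $t$.
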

\begin{proof}
Suppose that $\Sigma(G)$ is topologically mixing.
Then, by lemma \ref{lem:bowen}, there exists an $N \beposint$ such that for any pair $u$ and $v$ of vertices of $G$, there exists a path from $u$ to $v$ of length $n$ for all $n \geq N$.
Then, it is easy to check that every point $t \in \Sigma(G)$ is not isolated.
Hence, $\Sigma(G)$ is homeomorphic to $C$.
\end{proof}
%
%
\begin{lem}[{\rm Krieger's Marker Lemma, (2.2) of M.~Boyle} \cite{Boyle}]\label{lem:Krieger}
Let $(\Lambda,\sigma)$ be a \tsubshift. Given $k > N > 1$, there exists a closed and open set $F$ such that
\enuma 
\item the sets $\sigma^l(F), 0 \leq l < N$, are disjoint, and
\item if $t \in \Lambda$ and $t_{-k}\dots t_{k}$ is not a $j$-periodic word for any $j<N$, then
\[ t \in \bigcup_{-N < l < N} \sigma^l(F)\ . \]
\enumz
\end{lem}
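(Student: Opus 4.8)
The plan is to recast the two requirements combinatorially and then produce $F$ by a maximality argument, letting the aperiodicity hypothesis supply exactly the room needed to enlarge a marker set. First note that the separation condition (disjointness of $\sigma^{l}(F)$ for $0\le l<N$) is equivalent to $F\cap\sigma^{j}(F)=\emptyset$ for every $1\le j<N$; that is, along each orbit the marker positions $\{\,n:\sigma^{n}(t)\in F\,\}$ are spaced at least $N$ apart. Writing
\[ G := \{\, t \in \Lambda : t_{-k}\cdots t_{k}\ \text{is not}\ j\text{-periodic for any}\ 1 \le j < N \,\}, \]
membership in $G$ depends only on the coordinates $-k,\dots,k$, so $G$ is clopen, and the covering condition is exactly $G \subseteq \bigcup_{-N<l<N}\sigma^{l}(F)$. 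Thus the goal is a clopen $F$ that is $N$-separated in the above sense and whose central $2N-1$ translates cover $G$.

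Next I would build a maximal candidate. Let $\mathscr U$ denote the family of \emph{open} sets $U\subseteq\Lambda$ with $U\cap\sigma^{j}(U)=\emptyset$ for all $1\le j<N$, ordered by inclusion. The union of a chain in $\mathscr U$ is again open and retains pairwise disjoint translates, since any point witnessing a collision would already lie in a single member of the chain; Zorn's lemma then provides a maximal element $F\in\mathscr U$, which by definition meets the separation condition.

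The heart of the matter is the covering condition, and this is where $t\in G$ is used. Suppose some $t\in G$ were not covered even by $\overline F$, so that $\sigma^{m}(t)\notin\overline F$ for all $|m|<N$. Using that $\Lambda$ is totally disconnected I would choose a clopen neighbourhood $W$ of $t$, refined inside the cylinder on $t_{-k}\cdots t_{k}$, small enough that $\sigma^{m}(W)\cap\overline F=\emptyset$ for all $|m|<N$. Since $t_{-k}\cdots t_{k}$ is not $j$-periodic for any $j<N$, a direct reading of coordinates shows $W\cap\sigma^{j}(W)=\emptyset$ for $1\le j<N$; together with the previous disjointness this makes $F\cup W$ a strictly larger member of $\mathscr U$ containing the point $t\notin F$, contradicting maximality. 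Hence $G \subseteq \bigcup_{-N<l<N}\sigma^{l}(\overline F)$.

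The step I expect to be the main obstacle is the final upgrade from the maximal \emph{open} set to a genuinely clopen marker set. Maximality is natural for open sets, because chains admit open upper bounds, yet the covering I can establish cleanly is phrased for the closure $\overline F$, and replacing $F$ by $\overline F$ may reintroduce collisions $\overline F\cap\sigma^{j}(\overline F)\ne\emptyset$ along the boundary. Producing one clopen set that is simultaneously $N$-separated and covers $G$ is the delicate point; I would resolve it by exploiting the zero-dimensionality of $\Lambda$ to interpolate a clopen set between $F$ and $\overline F$ in the boundary region and then re-checking both conditions, which is in essence Krieger's argument as recorded by Boyle.
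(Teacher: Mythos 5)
Your reduction of the lemma to finding a clopen, $N$-separated set whose translates cover the clopen set $G$ is correct, and the Zorn's-lemma construction together with the enlargement argument (non-periodicity of $t_{-k}\dots t_k$ forcing $W\cap\sigma^j(W)=\emptyset$) is sound as far as it goes. But the proof is incomplete exactly at the point you flag, and the repair you sketch cannot work. What your argument delivers is an \emph{open} set $F$ with $G\subseteq\bigcup_{-N<l<N}\sigma^l(\overline F)$ --- covering by translates of the \emph{closure}, not of $F$. To fix this you propose to ``interpolate a clopen set between $F$ and $\overline F$''; however, any clopen set containing $F$ is in particular closed, hence contains $\overline F$, so such an interpolant must equal $\overline F$ itself. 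That is, your repair presupposes that $\overline F$ is clopen, which there is no reason to expect for a maximal open $N$-separated set; moreover $\overline F$ may fail separation, since $\overline F\cap\sigma^j(\overline F)$ can be nonempty even when $F\cap\sigma^j(F)=\emptyset$. This is not a removable technicality: clopen-ness of $F$ is the actual content of the lemma, and it is used downstream (in the proof of Lemma \ref{lem:marker-mix}) precisely through the fact that membership $\sigma^i(t)\in F$ is decided by a finite window $t(i-L)\dots t(i+L)$.

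The proof the paper points to (Boyle's (2.2); the paper itself only cites it) avoids Zorn entirely and is a \emph{finite} induction, which is what keeps everything clopen. Since membership in $G$ depends only on coordinates $-k,\dots,k$, write $G=E_1\cup\dots\cup E_r$ as a finite union of cylinders on those coordinates. Set $F_0=\emptyset$ and $F_i=F_{i-1}\cup\bigl(E_i\setminus\bigcup_{0<|j|<N}\sigma^j(F_{i-1})\bigr)$, and take $F=F_r$. Each $F_i$ is clopen, being a finite Boolean combination of clopen sets. Separation is preserved at every step: a collision of the newly added part with itself would force the defining word of $E_i$ to be $j$-periodic for some $j<N$ (your own coordinate-reading argument), while collisions between the new part and translates of $F_{i-1}$ are excluded by construction. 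Covering holds because each $t\in E_i$ either survives into $F_i\subseteq F$ or was deleted because $t\in\sigma^j(F_{i-1})\subseteq\sigma^j(F)$ for some $0<|j|<N$. In short, your enlargement-by-a-small-cylinder idea is the right local mechanism, but it must be run as a finite greedy induction over the cylinders constituting $G$, rather than as a maximality argument over open sets, which is where the clopen structure is irretrievably lost.
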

\begin{proof}
See M.~Boyle \cite[(2.2)]{Boyle}.
\end{proof}
%
%
The next lemma is essentially a part of the proof of the extension lemma given by M.~Boyle \cite[(2.4) ]{Boyle}.
%
%
Although the outlook of the next lemma seems slightly strengthened from Lemma 3.4 in \cite{Shimomura1}, the proof is almost same.
We show the proof only for completeness.

\begin{lem}\label{lem:marker-mix}
Let $(\Sigma,\sigma)$ be a mixing \tsubshiftt of finite type.
Let $W$ be a finite set of words that appear in some elements of $\Sigma$.
Then, there exists an $M \beposint$ that satisfies the following condition:
\begin{itemize}
\item if $(\Lambda,\sigma)$ is a \tsubshiftt such that $\percon{\Lambda,\sigma}{\Sigma,\sigma}$ and $\Lambda$ has either a non-periodic orbit or a periodic orbit of least period greater than $M$, then there exists a continuous shift-commuting mapping $\pi : \Lambda \to \Sigma$ such that there exists a $t \in \pi(\Lambda)$ in which all words in $W$ appear as segments of $t$.
\end{itemize}
\end{lem}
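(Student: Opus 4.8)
The plan is to realize $\pi$ as a marker--based sliding block code, following the scheme of Krieger and Boyle. After passing to a higher block presentation if necessary, I would assume $\Sigma=\Sigma(G)$ for a directed graph $G=(V,E)$ in which every vertex has positive in-- and out--degree. Since $\Sigma$ is mixing, Lemma~\ref{lem:bowen} furnishes an $N_0 \beposint$ such that between any two vertices of $G$ there is a path of every length $n\geq N_0$. Using these connecting paths I would splice the finitely many words of $W$ together into a single admissible word $w$ of some length $L$ in which every word of $W$ occurs as a segment. I then fix $M$, depending only on $\Sigma$ and $W$, with $M\geq L+2N_0$; this is the constant the lemma requires.

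Next, given $\Lambda$ with $\percon{\Lambda,\sigma}{\Sigma,\sigma}$ and a distinguished orbit that is either non--periodic or periodic of least period exceeding $M$, I would apply the Marker Lemma~\ref{lem:Krieger} to $\Lambda$ with $N:=M$ and with $k$ chosen large (depending on $\Lambda$, in particular large relative to the least period of the distinguished orbit), obtaining a clopen marker set $F$. Disjointness of the sets $\sigma^l(F)$ $(0\leq l<N)$ forces consecutive markers along any orbit to be at least $N$ apart, while the coverage clause, that every point whose central $(2k+1)$--window is not $j$--periodic for any $j<N$ lies in $\bigcup_{-N<l<N}\sigma^l(F)$, forces consecutive markers along any such stretch to be at most $2N-1$ apart. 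Thus on the aperiodic part of $\Lambda$ the markers cut orbits into blocks of length between $N$ and $2N-1$, and by $N=M\geq L+2N_0$ every block is long enough to hold $w$ flanked by connecting paths of length $\geq N_0$ on each side.

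I would then define $\pi$ block by block, equivariantly. Fix a reference vertex $a\in V$, require $\pi(t)$ to take the value $a$ at every marker coordinate, and on a block of length $g$ between two markers insert the admissible word obtained by concatenating a connecting path from $a$ of length $c_1$, the word $w$, and a connecting path to $a$ of length $c_2$, where $c_1,c_2\geq N_0$ and $c_1+L+c_2=g$ (possible since $g-L\geq 2N_0$ and $\Sigma$ is mixing). The value $\pi(t)(n)$ depends only on the positions of the nearest markers to the left and right of $n$, hence on finitely many coordinates of $t$ since $F$ is clopen and blocks are bounded, so $\pi$ is continuous; and since the rule refers only to the relative marker pattern it commutes with $\sigma$. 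The output lies in $\Sigma$ because each block is filled by an admissible path and consecutive blocks share the marker vertex $a$. A point of least period $p<N$ carries no marker (its return time to $F$ would be both $\geq N$ and $=p$), and $p\in\Per(\Lambda,\sigma)\subseteq\Per(\Sigma,\sigma)$, so I would route such a point to a period--$p$ point of $\Sigma$ chosen equivariantly; a point that is eventually $j$--periodic on one side for some $j<N$ is asymptotic, by closedness of $\Lambda$, to a period--$j$ orbit of $\Lambda$, whose period again lies in $\Per(\Sigma,\sigma)$, and I would route its marker--free tail to the corresponding asymptotic tail in $\Sigma$, glued to the marker rule at the last seam. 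Finally, the distinguished orbit, being non--periodic (with at least one seam, around which a large $k$ produces many markers) or periodic of least period $>M\geq N$ (hence fully covered), is chopped into at least one block, so its image is a point $t\in\pi(\Lambda)$ in which $w$, and therefore every word of $W$, appears.

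The main obstacle I expect is not the embedding of $w$, which is guaranteed once $N\geq L+2N_0$, but the book--keeping required to fuse these three regimes into a single continuous, shift--commuting, admissible map: one must check that the marker--based fill agrees with the periodic routing along the boundary between the covered and the eventually periodic parts of a point, and that the routing of periodic and semi--periodic points into $\Sigma$ can be made equivariant using only $\percon{\Lambda,\sigma}{\Sigma,\sigma}$. This gluing is exactly the content of Boyle's extension lemma \cite[(2.4)]{Boyle}, and the verification here is carried out as in Lemma~3.4 of \cite{Shimomura1}.
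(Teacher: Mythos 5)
Your overall strategy (Krieger's marker lemma plus block coding, with $M$ determined by mixing and a spliced word $w$ containing all of $W$) is the same as the paper's, but there is a genuine gap at the point where you claim that the markers cut orbits ``into blocks of length between $N$ and $2N-1$'' and, later, that $\pi$ is continuous ``since $F$ is clopen and blocks are bounded.'' That boundedness claim is false. The marker lemma only forces a point to lie within $N$ of a marker when its central $(2k+1)$-window is \emph{not} $j$-periodic for every $j<N$; consequently a non-periodic point of $\Lambda$ can contain finite but arbitrarily long unmarked stretches (for instance a long run of a fixed-point symbol flanked by non-periodic coordinates), so maximal unmarked intervals are unbounded over $\Lambda$. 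Your filling rule (connecting path, then $w$, then connecting path, with lengths depending on the gap length $g$) therefore has no uniform coding window, and a shift-commuting map without a uniform window is not continuous (Curtis--Hedlund--Lyndon); nothing in your construction makes the fill of a gap of length $g$ converge, as $g\to\infty$, to your separate rule for infinite periodic tails. Your three regimes (bounded inter-marker blocks, fully periodic points, one-sided eventually periodic tails) miss exactly the case of \emph{long finite} locally periodic intervals, which is the case the paper's proof is organized around: it codes short intervals by connecting words, codes the $j$-periodic core of every long or infinite interval by the image of the corresponding periodic orbit of $\Lambda$ (this is where $\percon{\Lambda,\sigma}{\Sigma,\sigma}$ and the auxiliary condition on $k$ are used), and inserts transition words of bounded length at the two ends of each long interval, so that every coordinate of the image is determined by a window of fixed size.

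A second, related defect: you insert $w$ only into finite inter-marker blocks. A non-periodic point can carry exactly one marker with infinite locally periodic intervals on both sides (a point homoclinic to a fixed point), in which case your map produces no occurrence of $w$ at all and the conclusion of the lemma can fail. The paper avoids this by choosing the connecting words $\Psi$ so that \emph{both} the short-interval fills \emph{and} the transition segments pass through the word $w_0$; since any point that is non-periodic, or periodic of least period greater than $M$, must carry at least one marker, it must exhibit a short interval or a transition segment, hence its image contains $w_0$ and therefore all of $W$. Deferring the gluing to Boyle's extension lemma does not repair these points, since the statement you are proving is essentially that lemma: the periodic-core-plus-transition coding has to be carried out, not cited.
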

\begin{proof}
$\Sigma$ is isomorphic to $\Sigma(G)$ for some directed graph $G = (V,E)$.
Therefore, without loss of generality, we assume that $\Sigma = \Sigma(G)$.
Because $(\Sigma(G),\sigma)$ is a mixing subshift of finite type, there exists an $n>0$ such that for every pair of elements $v,v' \in V$ and every $m \geq n$, there exists a word of the form $v\dots v'$ of length $m$.
In addition, there exists an element $\bar{t} \in \Sigma(G)$ such that $\bar{t}$ contains all words of $W$ as segments.
Let $w_0$ be a segment of $\bar{x}$ that contains all words of $W$.
Let $n_0$ be the length of the word $w_0$.
Let $N = 2n + n_0$.
If $v,v' \in V$ and $m \geq N$, then there exists a word of the form $v \dots w_0 \dots v'$ of length $m$.
Let $k > 2N$.
Let $M > N$.
Note that $N$ depends only on $\Sigma(G)$ and $W$.
Therefore, $M$ also depends only on $\Sigma(G)$ and $W$.
Let $\Lambda$ be a \tsubshiftt such that $\percon{\Lambda,\sigma}{\Sigma,\sigma}$ and $\Lambda$ has either a non-periodic orbit or a periodic orbit of least period greater than $M$.
Using Krieger's marker lemma, there exists a closed and open subset $F \subseteq \Lambda$ such that the following conditions hold:
\enuma
\item\label{item:disjoint} the sets $\sigma^l(F), 0 \leq l < N$, are disjoint;
\item\label{item:periodic} if $t \in \Lambda$ and $t \notin \bigcup_{-N < l < N}\sigma^l(F)$, then $t(-k)\dots t(k)$ is a $j$-periodic word for some $j < N$;
\item the number $k$ is large enough to ensure that if $j$ is less than $N$ and a $j$-periodic word of length $2k+1$ occurs in some element of $\Lambda$, then that word defines a $j$-periodic orbit that actually occurs in $\Lambda$.
\enumz
The existence of $k$ follows from the compactness of $\Lambda$.
Let $t \in \Lambda$.
If $\sigma^i(t) \in F$, then we {\it mark} $t$ at position $i$.
There exists a large number $L>0$ such that whether $\sigma^i(t) \in F$ is determined only by the $2L+1$ block $t(i-L)\dots t(i+L)$.
If $t$ is marked at position $i$, then $t$ is unmarked for position $l$ with $i < l < i+N$.
Suppose that $t(i) \dots t(i')$ is a segment of $t$ such that $t$ is marked at $i$ and $i'$ and $t$ is unmarked at $l$ for all $i < l <i'$.
Then, $i' - i \geq N$.
If $t \in \bigcup_{-N < l < N} \sigma^l(F)$, then $t$ is marked at some $i$ where $-N < i < N$.
Suppose that $t(-N+1)\dots t(N-1)$ is an unmarked segment.
Then, $t \notin  \bigcup_{-N < l < N} \sigma^l(F)$, and according to condition (\ref{item:periodic}), $t(-k)\dots t(k)$ is a $j$-periodic word for some $j < N$.
Suppose that $t(i) \dots t(i')$ is an unmarked segment of length at least $2N-1$, i.e., $i'-i \geq 2N-2$.
Then, for each $l$ with $i+N-1 \leq l \leq i'-N+1$, $t(l-k)\dots t(l+k)$ is a $j$-periodic word for some $j < N$.
Therefore, it is easy to check that $t(i+N-1-k) \dots t(i'-N+1+k)$ is a $j$-periodic word for some $j < N$.
In this proof, we call a maximal unmarked segment an {\it interval}. 
Let $t \in \Lambda$.
Let $\dots t(i)$ be a left infinite interval.
Then, it is $j$-periodic for some $j < N$.
Similarly, a right infinite interval $t(i) \dots $ is $j$-periodic for some $j < N$.
If $t$ itself is an interval, then it is a periodic point with period $j < N$.
If an interval is finite, then it has a length of at least $N-1$.
We call intervals of length less than $2N - 1$ as {\it short} intervals.
We call intervals of length greater than or equal to $2N - 1$ as {\it long } intervals.
If $t$ has a long interval $t(i) \dots t(i')$, then $t(i+N-1-k) \dots t(i'-N+1+k)$ is $j$-periodic for some $j < N$.
We have to construct a shift-commuting mapping $\phi : \Lambda \to \Sigma$.
Let $V'$ be the set of symbols of $\Lambda$.
Let $\Phi : V' \to V$ be an arbitrary mapping.
Let $t \in \Lambda$. Suppose that $t$ is marked at $i$.
Then, we let $(\phi(t))(i)$ be $\Phi(t(i))$.
We map periodic points of period $j < N$ to periodic points of $\Sigma$.
Then, we construct a coding of $\phi(t)$ in three parts.
For any $(v,v',l) \in V\times V \times \{N-1,N,N+1,\dots,2N-2\}$, we choose a word $\Psi(v,v',l)$ in $G$ of length $l$ such that the word of the form $v \Psi(v,v',l) v'$ is a path in $G$.

(A) {\it Coding for short interval:}
Let $t(i) \dots t(i')$ be a short interval. Then, $t$ is marked at $i-1$ and $i'+1$.
We have already defined a code for positions $i-1$ and $i'+1$ as $\Phi(t(i-1))$ and $\Phi(t(i'+1))$, respectively.
The coding for $\{i, i+1, i+2, \dots, i' \}$ is defined by the path $\Psi(\Phi(t(i-1)),\Phi(t(i'+1)),i'-i+1)$.

(B) {\it Coding for periodic segment: }
For an infinite or long interval, there exists a corresponding periodic point of $\Lambda$. The periodic points of $\Lambda$ are already mapped to periodic points of $\Sigma$.
Therefore, an infinite or long periodic segment can be mapped to a naturally corresponding periodic segment.

(C) {\it Coding for transition part: }
To consider a transition segment, let $t(i) \dots t(i')$ be a long interval.
Then, $t(i-1)$ has already been mapped to $\Phi(t(i-1))$, and $t(i+N-1)$ is mapped according to periodic points.
Assume that $t(i+N-1)$ is mapped to $v_0$.
The segment $t(i-1) \dots t(i+N-1)$ has length $N+1$.
We map the segment $t(i) \dots t(i+N-2)$ to $\Psi(\Phi(t(i-1)),v_0,N-1)$.
In the same manner, the transition coding of the right-hand side of a long interval is defined.
Similarly, the transition coding of the left or right infinite interval is defined.

It is easy to check that there exists a large number $L'>0$ such that the coding of $(\phi(t))(i)$ is determined only by the block $t(i-L') \dots t(i+L')$.
Therefore, $\phi : \Lambda \to \Sigma$ is continuous.
Because $\Lambda$ has either a $t \in \Lambda$, which is not a periodic point, or a $t' \in \Lambda$, which is a periodic point of least period greater than $M$, there appears a short interval or transition segment in some elements of $\Lambda$.
In the above coding, we can take $\Psi$ such that both short intervals and transition segments are mapped to words that involve $w_0$.
\end{proof}
\section{Proof of the main result}
\begin{lem}\label{lem:sft-kinji}
Let $X$ be homeomorphic to $C$ and $f$, a chain mixing element of $\End(X)$.
Let $\{\W_k\}_{k \beposint}$ be a sequence of non-trivial finite partitions by non-empty closed and open subsets of $X$ such that $\mesh(\W_k) \to 0$ as $k \to \infty$.
Then, there exists a sequence $\{\psi_k\}_{k \beposint}$ of homeomorphisms from $\Sigma(f,\W_k)$ to $X$ such that $\conjugacy{\sigma}{\psi_k} \to f$ as $k \to \infty$.
Furthermore, if $f$ is chain mixing, then all $(\Sigma(f,\W_k),\sigma)$ $(k \beposint)$ are mixing.
\end{lem}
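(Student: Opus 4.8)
The plan is to prove the two assertions together, deriving the mixing of each $(\Sigma(f,\W_k),\sigma)$ first, since this is precisely what will permit the homeomorphisms $\psi_k$ to be constructed. Fix $k$ and abbreviate $G = G(f,\W_k)$. Because $\W_k$ is a finite partition into disjoint non-empty clopen sets, the number $\delta_k := \min\{d(U,U') : U,U' \in \W_k, U \neq U'\}$ is strictly positive. The decisive observation is that $\delta_k$-chains of $f$ project onto paths of $G$: if $\{x_i\}_{i=0}^{l}$ is a $\delta_k$-chain and $U_i \in \W_k$ is the element containing $x_i$, then $d(f(x_i),x_{i+1}) < \delta_k$ prevents $f(x_i)$ and $x_{i+1}$ from lying in distinct partition elements, so $f(x_i) \in U_{i+1}$; since $f(x_i) \in f(U_i)$ this yields $f(U_i) \cap U_{i+1} \neq \emptyset$, that is $(U_i,U_{i+1}) \in E(f,\W_k)$, and $U_0 U_1 \cdots U_l$ is a path. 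I would then fix a representative $x_U \in U$ for each vertex $U$ and apply chain mixing with $\delta = \delta_k$ to each of the finitely many pairs $(x_U,x_{U'})$, taking the maximum of the resulting thresholds to obtain a single $N_k$ such that for every ordered pair of vertices and every $n \geq N_k$ there is a path between them of length $n$. Lemma \ref{lem:bowen} then gives that $\Sigma(f,\W_k)$ is topologically mixing; since $\W_k$ is non-trivial, at least two vertices occur, so $\Sigma(f,\W_k)$ is not a single point and Lemma \ref{lem:mix-Cantor} makes it homeomorphic to $C$.

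With the Cantor structure in hand I would construct $\psi_k$ directly from the time-zero cylinders. For $U \in \W_k$ put $[U] := \{t \in \Sigma(f,\W_k) : t(0) = U\}$; these are non-empty (every vertex occurs), pairwise disjoint, clopen, and partition $\Sigma(f,\W_k)$, and each is a non-empty clopen subset of a Cantor set, hence homeomorphic to $C$. Each $U \in \W_k$ is likewise a non-empty clopen subset of $X \cong C$. Choosing a homeomorphism $[U] \to U$ for every $U$ and gluing yields a homeomorphism $\psi_k : \Sigma(f,\W_k) \to X$ with $\psi_k([U]) = U$ for all $U$. Writing $g_k := \conjugacy{\sigma}{\psi_k}$ and using $\psi_k^{-1}(U) = [U]$, one checks that $(U,U') \in E(g_k,\W_k)$ holds iff $\sigma([U]) \cap [U'] \neq \emptyset$, i.e. iff some admissible sequence has $t(0) = U$ and $t(1) = U'$; by the standing in- and out-degree property of $G(f,\W_k)$ this happens exactly when $(U,U') \in E(f,\W_k)$. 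Thus $G(g_k,\W_k)$ is a subgraph of $G(f,\W_k)$. Since $\mesh(\W_k) \to 0$, Lemma \ref{lem:subgraph-kinji} then delivers $\conjugacy{\sigma}{\psi_k} = g_k \to f$, which is the required conclusion.

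The step I expect to be the main obstacle is the first one, namely the faithful translation between the metric notion of a $\delta$-chain and the combinatorial notion of a path in $G(f,\W_k)$, uniformly over all vertex pairs. Everything hinges on the quantitative choice $\delta = \delta_k$: it is exactly this bound that forbids consecutive chain points $f(x_i)$ and $x_{i+1}$ from straddling two distinct partition elements, and so guarantees that a chain genuinely projects to an admissible path. Once this is secured, the passage to a single threshold $N_k$ uses only the finiteness of $\W_k$, and the construction of $\psi_k$ together with the subgraph identity is routine bookkeeping resting on Lemmas \ref{lem:mix-Cantor} and \ref{lem:subgraph-kinji}. It is worth emphasizing that chain mixing is used throughout, and not merely for the ``furthermore'' clause, since the homeomorphisms $\psi_k$ cannot be built unless $\Sigma(f,\W_k)$ is already known to be a Cantor set.
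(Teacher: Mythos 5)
Your proposal is correct and follows essentially the same route as the paper's proof: a separation constant $\delta$ for the clopen partition so that $\delta$-chains project to paths in $G(f,\W_k)$, chain mixing plus Lemma \ref{lem:bowen} to get topological mixing, Lemma \ref{lem:mix-Cantor} for the Cantor structure, a homeomorphism $\psi_k$ matching time-zero cylinders to partition elements, and Lemma \ref{lem:subgraph-kinji} to conclude convergence. The only differences are cosmetic refinements (making the uniform threshold $N_k$ explicit and verifying the non-single-point hypothesis of Lemma \ref{lem:mix-Cantor}), which the paper leaves implicit.
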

\begin{proof}
Consider a sequence $\{\W_k\}_{k \beposint}$ of non-trivial partitions of $X$ by non-empty closed and open subsets such that $\mesh(\W_k) \to 0$ as $k \to \infty$.
Assume that $k \beposint$.
Let $G_k = G(f,\W_k)$.
Let $\delta > 0$ be such that if $x,x' \in X$ satisfy $d(x,x') < \delta$, then both $x$ and $x'$ are contained in the same element of $\W_k$.
Let $\{x_0,x_1\}$ be a $\delta$ chain.
Let $U,U' \in \W_k$ be such that $x_0 \in U$ and  $x_1 \in U'$.
Then, $f(U) \cap U' \nekuu$.
Therefore, $(U,U')$ is an edge of $G_k$.
Let $U,V \in \W_k$.
Let $x \in U$ and $y \in V$.
Because $f$ is chain mixing, there exists an $N > 0$ such that for every $n \geq N$, there exists a $\delta$ chain from $x$ to $y$ of length $n$.
Therefore, for every $n \geq N$, there exists a path in $G_k$ from $U$ to $V$ of length $n$.
From lemma \ref{lem:bowen}, $(\Sigma(G_k),\sigma)$ is topologically mixing.
By lemma \ref{lem:mix-Cantor}, $\Sigma(G_k)$ is homeomorphic to $C$.
Therefore, there exists a homeomorphism $\psi_k : \Sigma(G_k) \to X$ such that for any vertex $u$ of $G_k$, $\psi_k(C_0(u)) = u$.
Let $g_k = \conjugacy{\sigma}{\psi_k}$.
Then, by construction, we obtain $G(g_k,\U_k) = G(f,\U_k)$.
Because $\mesh(\U_k) \to 0$ as $k \to \infty$, we conclude that $g_k \to f$ as $k \to \infty$ by lemma \ref{lem:subgraph-kinji}.
\end{proof}

Proof of Theorem \ref{thm:main}
\begin{proof}
Let $X$ and $Y$ be homeomorphic to $C$.
First, suppose that $(Y,g) \kinsetu (X,f)$. Then, it is easy to see that $\percon{Y,g}{X,f}$.
Conversely, suppose that $f \in \End(X)$ is chain mixing; $g \in \Hom(Y)$, aperiodic; and that $\percon{Y,g}{X,f}$.
Let $\{\W_i\}_{i \beposint}$ be a sequence of non-trivial finite partitions by non-empty closed and open subsets of $X$ such that $\mesh(\W_i) \to 0 $ as $i \to \infty$.
By lemma \ref{lem:sft-kinji}, there exists a sequence of homeomorphisms $\psi_i : \Sigma(f,\W_i) \to X$ such that $\conjugacy{\sigma}{\psi_i} \to f$ as $i \to \infty$ and that all $(\Sigma(f,\W_i),\sigma)$ $(i \beposint)$ are mixing.
Fix $i \beposint$.
Let $\Sigma = \Sigma(f,\W_i)$.
Let $\{\U_k\}_{k \beposint}$ be a sequence of finite partitions of $\Sigma$ by non-empty closed and open subsets.
Let $\U_k = \{ U_{k,j} : 1 \leq j \leq n_k \}$ for $k \beposint$.
Then, there exists a sequence $u_{k,j}$ $(k \beposint, 1 \leq j \leq n_k)$ of words and a sequence $m(k,j)$ $(1 \leq j \leq n_k)$ of integers such that the following condition is satisfied:
\[ C_{m(k,j)}(u_{k,j}) \subseteq U_{k,j}\ \ (k \beposint, 1 \leq j \leq n_k). \]
Fix $k \beposint$.
Let $W = \{u_{k,j} ~|~ 1 \leq j \leq n_k \}$.
We shall show the following:
\enuma
\item\label{item:mokuhyo} there exists a continuous commuting mapping  $\barphi_k : Y \to \Sigma$ such that $\barphi_k(Y)$ contains an element $t \in \Sigma$ that contains all words of $W$.
\enumz
Then, $\barphi_k(Y) \cap U \nekuu$ for all $U \in \U_k$.
Because $k \beposint$ is arbitrary, we conclude that $(Y,g) \kinsetu \Sigma$ by lemma \ref{lem:fine-kinsetu}.
Then, by lemma \ref{lem:sft-kinji} and lemma \ref{lem:sequence-kinsetu}, we can conclude that $(Y,g) \kinsetu (X,f)$.

Let $M$ be a positive integer that satisfies the condition in lemma \ref{lem:marker-mix}.
Let $\V$ be a partition of $Y$ by non-empty closed and open subsets.
Then, for each $y \in Y$, there exists a unique $t_y \in \Sigma(g,\V)$ such that $g^l(y) \in t_y(l) \in \V$ for all $l \in \Z$.
Therefore, there exists a commuting mapping $\phi_{\V} : Y \to \Sigma(g,\V)$ such that $\phi_{\V}(y) = t_y$ for all $y \in Y$.
Because all elements of $\V$ are open, it is easy to see that $\phi_{\V}$ is continuous.
Let $\Lambda = \phi_{\V}(Y)$.
Then, $\Lambda$ is a \tsubshift.
Because $\Sigma$ is mixing, there exists an $m \beposint$ such that for all integer $n \geq m$, there exists a periodic point $t_n \in \Sigma$ of period $n$.
If $\V$ is sufficiently fine, then the period $n \in \Per(\Sigma(g,\V),\sigma)$, where $n < m$, has a real periodic point of $(Y,g)$ of period $n$.
Therefore, because $\percon{Y,g}{X,f}$, we get $\percon{\Sigma(g,\V),\sigma}{\Sigma,\sigma}$ for all sufficiently fine $\V$.
Let $\bar{M} > \max \{m,M\}$ be an arbitrary positive integer.
Because $g$ is aperiodic, if $\V$ is sufficiently fine, then $\Lambda$ is not a set of periodic points of period less than $\bar{M}$.
Therefore, by lemma \ref{lem:marker-mix}, there exists a continuous commuting mapping $\pi_k : \Lambda \to \Sigma$ such that $\pi_k(\Lambda)$ contains an element that contains all words of $W$.
Finally, let $\barphi_k = \pi_k \circ \phi_{\V}$; this concludes the proof.
\end{proof}

\vspace{3cm}

\end{document}